\newcommand{\E}{\mathbb{E}}
\newcommand{\PP}{\mathbb{P}}
\newcommand{\N}{\mathbb{N}}
\DeclareMathOperator{\sgn}{sgn}
\theoremstyle{plain}%
\newtheorem{theorem}{Theorem}[section]
\newtheorem{proposition}[theorem]{Proposition}
\theoremstyle{definition}
\theoremstyle{remark}
\newtheorem{remark}[theorem]{Remark}
\title{A Few  Surprising Integrals}
\author{ and Jeffrey E. Steif}
\author{Malin Palö Forsström
\thanks{Chalmers University of Technology and Gothenburg University, Gothenburg, Sweden and KTH Royal Institute of Technology, Stockholm, Sweden.\ \ Email:
        \hbox{malinpf@kth.se} }
\and         Jeffrey E. Steif
\thanks{Chalmers University of Technology and Gothenburg University, Gothenburg, Sweden.\ \ Email:
        \hbox{steif@chalmers.se}}
}
\date{\today} 
\begin{document}

\maketitle

\begin{abstract}
Using formulas for certain quantities involving stable vectors, due to\\
I. Molchanov, and in some cases utilizing the so-called  divide and color model, 
we prove that certain families of integrals
which, ostensibly, depend on a parameter are in fact independent of this parameter.

 \medskip\noindent
 \emph{Keywords and phrases.} Stable vectors, threshold stable vectors, exchangeable processes,
divide and color processes.
 \newline
 MSC 2010 \emph{subject classifications.}
 Primary 60E07, 60G52           
  \medskip\noindent
\end{abstract}

\section{Statement of Result and Proof}

In the pursuit of some other questions, we realized that the following integrals
  surprisingly had the same value for all  $\alpha$. 

\begin{theorem}\label{theorem:wierd integral}
The following two integrals, the first to be taken in the Cauchy principal value sense as it is not 
Lebesgue integrable, are independent of $\alpha>0$, the first having value $\pi^2/6$ and 
the second having value $\pi^2/4$.
$$
\int_0^\pi
\frac{\log \left(|\cos \theta|^\alpha + |\sin \theta |^\alpha + |\cos \theta + \sin \theta|^\alpha \right)}{\alpha \cos \theta \sin \theta} d\theta ,
$$
$$
\int_0^\pi
\frac{\log \left( \frac{|\sin \theta |^\alpha}{2} + |\cos \theta + (\frac{1}{2})^{\frac{1}{\alpha}}\sin \theta|^\alpha \right)}{\alpha \cos \theta \sin \theta} d\theta .
$$
\end{theorem}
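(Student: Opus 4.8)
\emph{Setup via stable vectors.} The plan is to read each integrand as a probabilistic quantity attached to an explicit bivariate symmetric $\alpha$-stable vector $(X_1,X_2)$. For the first integral, take $X_1=Z_1+Z_3$ and $X_2=Z_2+Z_3$, where $Z_1,Z_2,Z_3$ are i.i.d.\ standard symmetric $\alpha$-stable; then the joint characteristic exponent is $|t_1|^\alpha+|t_2|^\alpha+|t_1+t_2|^\alpha$, which at $(t_1,t_2)=(r\cos\theta,r\sin\theta)$ equals $r^\alpha$ times the bracketed expression. For the second integral, take $X_1=W_1$ and $X_2=(1/2)^{1/\alpha}(W_1+W_2)$ with $W_1,W_2$ i.i.d.\ standard symmetric $\alpha$-stable, which reproduces the second bracket. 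In each case the bracket equals $\sigma(\theta)^\alpha$, where $\sigma(\theta)$ is the scale parameter of the one-dimensional symmetric $\alpha$-stable variable $\cos\theta\,X_1+\sin\theta\,X_2$; hence each integrand is exactly $\log\sigma(\theta)/(\cos\theta\sin\theta)$. This also explains why the first integral needs a principal value but the second does not: the first bracket tends to $2$ at $\theta\in\{0,\pi/2,\pi\}$, producing a non-integrable $1/\theta$-type singularity, whereas the second bracket tends to $1$ there, so its logarithm vanishes.

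\emph{Removing the stable scale.} Since $\cos\theta\,X_1+\sin\theta\,X_2\overset{d}{=}\sigma(\theta)\,Y$ for a standard symmetric $\alpha$-stable $Y$, we have $\log\sigma(\theta)=\E\bigl[\log|\cos\theta\,X_1+\sin\theta\,X_2|\bigr]-c_\alpha$, where $c_\alpha=\E[\log|Y|]$ is a finite constant. The weight $w(\theta):=1/(\cos\theta\sin\theta)=2/\sin2\theta$ is antisymmetric about $\theta=\pi/2$, so its principal value over $(0,\pi)$ is $0$ and the constant $c_\alpha$ disappears. After justifying an interchange of $\E$ with the principal-value integral, each integral becomes $\E[I(X_1,X_2)]$ with
$$
I(a,b):=\int_0^\pi \frac{\log|a\cos\theta+b\sin\theta|}{\cos\theta\sin\theta}\,d\theta .
$$
This representation is the crucial move: it decouples the angular variable from the randomness, and it is here that I expect to invoke (and essentially reprove) the Molchanov-type formula relating sign correlations of a stable vector to such an integral.

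\emph{The deterministic integral.} The heart of the argument is the identity $I(a,b)=\tfrac{\pi^2}{2}\sgn(ab)$. Writing $a\cos\theta+b\sin\theta=R\cos(\theta-\theta_0)$ with $R=\sqrt{a^2+b^2}>0$ and $(\cos\theta_0,\sin\theta_0)=(a,b)/R$, the $\log R$ term contributes $0$ (again by antisymmetry of $w$), so $I$ is scale invariant and depends only on $\theta_0$. Substituting the Fourier expansion $\log|\cos\phi|=-\log2+\sum_{k\ge1}\frac{(-1)^{k+1}}{k}\cos(2k\phi)$ with $\phi=\theta-\theta_0$ and integrating term by term against $w$, the $\cos(2k\theta)$ parts vanish by parity while $\int_0^\pi\sin(2k\theta)\,w(\theta)\,d\theta=\int_0^{2\pi}\frac{\sin k\psi}{\sin\psi}\,d\psi=2\pi\,\mathbf{1}[k\text{ odd}]$. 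The surviving odd harmonics sum to the square wave $2\pi\sum_{j\ge1}\frac{\sin\bigl(2(2j-1)\theta_0\bigr)}{2j-1}=\tfrac{\pi^2}{2}\sgn(\sin2\theta_0)=\tfrac{\pi^2}{2}\sgn(ab)$. I expect the only real obstacle to be rigour in these two steps — the interchange of expectation with the principal value, and the term-by-term integration against the singular weight; the underlying arithmetic is clean.

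\emph{Extracting the $\alpha$-independent value.} Combining the above, each integral equals $\tfrac{\pi^2}{2}\E[\sgn(X_1)\sgn(X_2)]=\tfrac{\pi^2}{2}\bigl(4\,\PP(X_1>0,\,X_2>0)-1\bigr)$, using the $X\mapsto-X$ symmetry of the vector. The independence of $\alpha$ now drops out of the probability integral transform. Writing $F$ for the (continuous, symmetric) common marginal and using $\PP(Z_1>-z)=F(z)$, the first case gives $\PP(X_1>0,X_2>0)=\E[F(Z_3)^2]=\int_0^1 u^2\,du=\tfrac13$, so the first integral equals $\tfrac{\pi^2}{2}\cdot\tfrac13=\tfrac{\pi^2}{6}$. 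In the second case $\sgn(X_2)=\sgn(W_1+W_2)$, and conditioning on $W_1=w>0$ gives $\PP(X_1>0,X_2>0)=\int_0^\infty F(w)\,dF(w)=\int_{1/2}^1 u\,du=\tfrac38$, so the second integral equals $\tfrac{\pi^2}{2}\cdot\tfrac12=\tfrac{\pi^2}{4}$. Since $F(Z)$ is uniform on $[0,1]$ regardless of $\alpha$, both values are manifestly independent of $\alpha$, as claimed.
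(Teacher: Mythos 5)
Your proposal is correct in substance, and its probabilistic core coincides with the paper's: you encode each bracket as the characteristic exponent of an explicit bivariate symmetric $\alpha$-stable vector (your $(Z_1+Z_3,Z_2+Z_3)$ is $2^{1/\alpha}$ times the paper's $(X_1,X_2)$, harmless since only signs matter, and your second vector is the paper's pair $(S,X_1)$ in disguise), reduce each integral to $\tfrac{\pi^2}{2}\,\E[\sgn(X_1)\sgn(X_2)]$, and evaluate the sign correlations via the probability integral transform, getting $\E[F(Z_3)^2]=\int_0^1 u^2\,du=1/3$ and $\int_0^\infty F(w)\,dF(w)=\int_{1/2}^1 u\,du=3/8$, exactly as the paper does. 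The genuine difference is the bridge between the integral and the sign correlation: the paper invokes Corollary 6.12 of Molchanov as a black box (``after some work''), whereas you reprove the needed special case from scratch, via $\log\sigma(\theta)=\E\bigl[\log|\cos\theta\,X_1+\sin\theta\,X_2|\bigr]-c_\alpha$, the principal-value antisymmetry of $w(\theta)=1/(\cos\theta\sin\theta)$ about $\pi/2$ (which annihilates both $c_\alpha$ and the $\log R$ term), and the Fourier evaluation $I(a,b)=\tfrac{\pi^2}{2}\sgn(ab)$. I have checked that computation --- $\int_0^{2\pi}\sin(k\psi)/\sin\psi\,d\psi=2\pi$ for odd $k$ and $0$ for even $k$, with the surviving odd harmonics summing to the square wave --- and it is correct. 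This buys a self-contained proof, and as a bonus it transparently explains why the first integral requires a principal value while the second is absolutely convergent (the second bracket equals $1$ at $\theta\in\{0,\pi/2,\pi\}$), matching the observation credited to Janson in the paper's acknowledgements.

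Two points need attention before this is a complete proof. First, a genuine gap in scope: symmetric $\alpha$-stable laws exist only for $\alpha\in(0,2]$, so your argument proves constancy only on that range, while the theorem asserts it for all $\alpha>0$. You need the paper's one-line patch: each integral is (real-)analytic in $\alpha$ on $(0,\infty)$, so constancy on $(0,2)$ propagates to all $\alpha>0$ by the identity theorem; as written, your proof does not establish the stated range. Second, the two interchanges you flag --- expectation versus principal value, and term-by-term integration of the Fourier series of $\log|\cos(\theta-\theta_0)|$ against the non-integrable weight $w$ --- are the entire analytic content of your replacement for Molchanov's corollary, and they are currently only sketched. They do go through (work on the symmetric truncations $[\epsilon,\pi/2-\epsilon]\cup[\pi/2+\epsilon,\pi-\epsilon]$, where finiteness of $\E\bigl[\,|\log|\cos\theta\,X_1+\sin\theta\,X_2||\,\bigr]$ licenses Fubini, and use Abel summation for the series), but until written out your key identity is a plausible program rather than a proof. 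With those two repairs, your argument is correct and strictly more self-contained than the paper's.
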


\begin{proof}[Proof]

We will show the independence in $\alpha$ for $\alpha\in (0,2)$ and then appeal to analyticity to draw
the conclusion for all $\alpha>0$.

Let $S,S_1,S_2$ be i.i.d.\ each having a symmetric stable distribution with stability exponent 
$\alpha\in (0,2)$
and scale one; this means
that their common characteristic function is given by $f(\theta)=e^{-|\theta|^\alpha}$. 
Next, let
$$
X_1\coloneqq  \frac{S+S_1}{2^{1/\alpha}}, \,\,\, X_2\coloneqq  \frac{S+S_2}{2^{1/\alpha}} .
$$
One immediately checks (from known theory, e.g. \cite{st1994}, or by computing the characteristic functions)
that $X_1$ and $X_2$ each also has a symmetric stable distribution with stability
exponent $\alpha$ and scale one. 
%($(X_1,X_2)$ is also a stable vector.)

(i). For the first integral, we will consider 
\begin{equation}\label{eq: cov}
\E[\sgn(X_1)\sgn(X_2)]
\end{equation}
and compute its value in two different ways.
On one hand, Corollary 6.12 in \cite{m2009} implies,
after some work, that~\eqref{eq: cov} is, for a given $\alpha\in (0,2)$, 
$$
\frac{2}{\pi^2} \int_0^\pi \frac{\log \left(|\cos \theta|^\alpha + |\sin \theta |^\alpha + |\cos \theta + \sin \theta|^\alpha \right)}{\alpha \cos \theta \sin \theta} d\theta,
$$
where the integral is to be taken in the Cauchy principal value sense.
On the other hand, one can show directly, as we do below, that
\begin{equation}\label{eq: cov1/3}
\E[\sgn(X_1)\sgn(X_2)]=1/3
\end{equation}
for each such value of $\alpha$. 
This implies that this integral is independent of $\alpha$ with value $\pi^2/6$.
In order to obtain~\eqref{eq: cov1/3}, note first that, by symmetry,
$$
\E[\sgn(X_1)\sgn(X_2)]=4\PP[\sgn(X_1)=\sgn(X_2)=1]-1
$$
and so it suffices to show that 
$$
\PP[\sgn(X_1)=\sgn(X_2)=1]=1/3.
$$
Here
\begin{align*}
\PP[\sgn(X_1)=\sgn(X_2)=1] = \E[\PP[\sgn(X_1)=\sgn(X_2)=1\mid S]]
\end{align*}
 which in turn is equal to
$$
\E[\PP[\sgn(X_1)=1\mid S]^2]=
\E[\PP[S_1\ge -S\mid S]^2]=\E[\PP[S_1\ge -S]^2].
$$
By symmetry of $S_1$, this equals
$$
\E[\PP[S_1\le S]^2]=\E[F[S]^2]
$$
where $F$ is the distribution function of $S$.
For any random variable $W$ with a continuous distribution function $G$, 
on has that $G(W)$ has a uniform distribution. It follows that this last expression is
$$
\int_0^1 x^2 dx=1/3.
$$
This completes (i).

(ii). For the second integral, we will consider 
\begin{equation}\label{eq: covagain}
\E[\sgn(X_1)\sgn(S)]
\end{equation}
and compute its value in two different ways.
On one hand, Corollary 6.12 in \cite{m2009} implies,
after some work, that~\eqref{eq: covagain} is, for a given $\alpha\in (0,2)$, 
$$
\frac{2}{\pi^2}
 \int_0^\pi \frac{\log \left( \frac{|\sin \theta |^\alpha}{2} + |\cos \theta + (\frac{1}{2})^{\frac{1}{\alpha}}\sin \theta|^\alpha \right)}{\alpha \cos \theta \sin \theta} d\theta .
$$
On the other hand, as we explain below, 
\begin{equation}\label{eq: covagain1/2}
\E[\sgn(X_1)\sgn(S)]=1/2
\end{equation}
for each such value of $\alpha$. 
This will then imply that this integral is independent of $\alpha$ with value $\pi^2/4$.
Similar to (i), symmetry yields
$$
\E[\sgn(X_1)\sgn(S)]=4\PP[\sgn(X_1)=\sgn(S)=1]-1
$$
and so it suffices to show that 
$$
\PP[\sgn(X_1)=\sgn(S)=1]=3/8.
$$
To this end, note first  that
\begin{align*}
&\PP[\sgn(X_1)=\sgn(S)=1]
=
\int_0^\infty \PP[S_1\ge -s\mid S=s]  dF(s)
\\&\qquad=
\int_0^\infty \PP[S_1\ge -s]  dF(s)=
\int_0^\infty F(s) dF(s)
\end{align*}
by symmetry of $S_1$. This becomes after the change of variables $x=F(s)$,
$$
\int_{\frac{1}{2}}^1 x dx =3/8.
$$
This completes (ii).
\end{proof}

\begin{remark}
\begin{enumerate}[(i)]
\item After having obtained the above theorem, we asked on 
Mathematics Stack Exchange if one
could more directly obtain the value of $\pi^2/6$, independent of $\alpha$, 
in the first integral. This was shown by Jack D'Aurizio, see
\url{https://tinyurl.com/y6fth8vr}.\\

\item Once we knew that $\E[\sgn(X_1)\sgn(X_2)]$ was independent of $\alpha$, of course
any formula for $\E[\sgn(X_1)\sgn(X_2)]$ would have to be independent of $\alpha$, 
in particular the formula given in Corollary 6.12 in \cite{m2009} which is the above integral.
However we would have guessed that the independence in $\alpha$ of such a formula
would have appeared in a more transparent way in the integral; surprisingly this was not the 
case. \\

\item There is an alternative argument of~\eqref{eq: covagain1/2} which we very briefly sketch.
Consider the vector $(\sgn(X_1),\sgn(S),\sgn(S_1))$. It is clear that this vector is
\(  \pm 1  \)-symmetric and has pairwise nonnegative correlations.
It follows from Proposition 2.12 in \cite{st2017} that this is then a so-called divide and color process.
This means that there is a random partition of the set $\{1,2,3\}$ so that if we first randomly partition
$\{1,2,3\}$ and then assign the same value to each element of a partition element, 
$\pm 1$ each with probability $1/2$,
independently for different partition elements, then we obtain, in distribution,
$(\sgn(X_1),\sgn(S),\sgn(S_1))$. What can this random partition look like? Since $S$ and 
$S_1$ are independent, ``$2$'' and ``$3$'' must always be put in different partition elements.
``$1$'' can never be its own partition element, since then the realization $(-1,1,1)$ would   have
positive probability. However it is clear that for $(\sgn(X_1),\sgn(S),\sgn(S_1))$,
this has zero probability. Hence the only partitions which can have positive weight are
$\{\{1,2\},\{3\}\}$ and $\{\{1,3\},\{2\}\}$ and by symmetry these must each have weight $1/2$.
It is however clear that the covariance of two variables in a divide and color process 
is simply the probabilty that they are in the same partition element, and hence 
we obtain~\eqref{eq: covagain1/2}.

\end{enumerate}
\end{remark}

One can extend the proof of the independence in $\alpha$ of the first integral to higher
dimensional integrals.
Let $S,S_1,S_2,\ldots$ be i.i.d.\ each having a symmetric stable distribution with 
stability exponent $\alpha\in (0,2)$ and scale one, and let for $i\ge  1$
$$
X_i\coloneqq  \frac{S+S_i}{2^{1/\alpha}}.
$$
We now consider \( \E [\sgn(X_1 X_2 \cdots X_n)]\), the analogue of $\E[\sgn(X_1)\sgn(X_2)]$.
By symmetry, this is zero for $n$ odd. 
The following proposition follows partially from the analysis in
Section 3.5 in \cite{st2017}. The case $n=2$ corresponds to~\eqref{eq: cov1/3}. The proof is only
sketched.

\begin{proposition}\label{lemma: st} For even $n$ and for all values of $\alpha\in (0,2)$,
\(\E [\sgn(X_1 X_2 \cdots X_n)]=1/(n+1)\)
\end{proposition}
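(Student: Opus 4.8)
The plan is to mimic the conditioning argument used for the first integral in part (i) of Theorem~\ref{theorem:wierd integral}, now exploiting the conditional independence of all $n$ signs given $S$ rather than just two. First I would observe that, since $2^{1/\alpha}>0$, we have $\sgn(X_i)=\sgn(S+S_i)$; hence conditioning on $S=s$ renders $\sgn(X_1),\dots,\sgn(X_n)$ mutually independent, because the $S_i$ are independent of each other and of $S$.

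The key step is then a factorization of the conditional expectation of the product. For each $i$, by symmetry of $S_i$ together with the fact that each $S_i$ has the same distribution function $F$ as $S$,
$$
\E[\sgn(X_i)\mid S=s]=\PP[S_i\ge -s]-\PP[S_i<-s]=2F(s)-1 .
$$
By the conditional independence just noted, the conditional expectation of the product factorizes as
$$
\E[\sgn(X_1 X_2\cdots X_n)\mid S=s]=\bigl(2F(s)-1\bigr)^n ,
$$
and taking expectations over $S$ yields $\E[\sgn(X_1\cdots X_n)]=\E\bigl[(2F(S)-1)^n\bigr]$.

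For the final computation I would use that a symmetric $\alpha$-stable law with $\alpha\in(0,2)$ has a continuous distribution function, so, exactly as in (i), $F(S)$ is uniform on $[0,1]$ and therefore $U\coloneqq 2F(S)-1$ is uniform on $[-1,1]$. Consequently
$$
\E[\sgn(X_1\cdots X_n)]=\frac{1}{2}\int_{-1}^1 u^n\,du=\frac{1}{n+1}
$$
for even $n$, recovering the value $1/3$ when $n=2$ in agreement with~\eqref{eq: cov1/3}; for odd $n$ the integral vanishes, matching the symmetry remark preceding the statement. I do not expect a genuine obstacle here: the two points meriting care are the justification of the conditional independence (immediate from independence of the $S_i$ given $S$) and the uniformity of $F(S)$ (standard, given continuity of $F$). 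The content of the result is thus that the seemingly $\alpha$-dependent quantity collapses, after conditioning, to a single moment of the uniform distribution, which is manifestly free of $\alpha$.
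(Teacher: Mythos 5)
Your proof is correct, and it takes a genuinely different and more elementary route than the paper's. You generalize the conditioning computation from part (i) of Theorem~\ref{theorem:wierd integral}: given $S$, the signs $\sgn(X_1),\dots,\sgn(X_n)$ are i.i.d.\ with conditional mean $2F(S)-1$, so the product's expectation is $\E[(2F(S)-1)^n]$, and since $F$ is continuous (stable laws for $\alpha\in(0,2)$ have densities), $2F(S)-1$ is uniform on $[-1,1]$, giving $1/(n+1)$ directly. The paper instead observes that $(\sgn(X_1),\sgn(X_2),\ldots)$ is an infinite exchangeable sequence, invokes de Finetti's Theorem, cites the nontrivial fact from \cite{st2017} that the mixing measure $\nu$ is uniform on $[0,1]$, and then passes to an entirely different process with the same de Finetti measure --- a Kingman paintbox construction with box sizes $(1/2,1/4,\ldots)$ --- where \(\E[V_1\cdots V_n]\) is the probability that all blocks of the induced partition of $\{1,\dots,n\}$ have even size, computed to be $1/(n+1)$ by an induction using the paintbox's scale invariance. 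Your argument buys self-containedness: it needs no external citation and no induction, and it in fact \emph{reproves} the uniformity of $\nu$ along the way, since your computation exhibits the sequence as conditionally i.i.d.\ with random density $F(S)\sim\unif[0,1]$. What the paper's route buys is the structural connection to exchangeability and divide and color models that is the authors' broader theme (cf.\ Section 3.5 of \cite{st2017} and Remark 1.2(iii)), together with a combinatorial interpretation of why the answer is $1/(n+1)$; as a pure proof of the stated identity, however, your version is shorter and arguably preferable.
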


\begin{proof}[Proof]
Clearly $(\sgn(X_1),\sgn(X_2),\ldots)$ is an infinite exchangable sequence
and hence its distribution is given, due to de Finetti's Theorem (\cite{DURRETT}), by 
\begin{equation}%\label{e.DF}
\int_{s=0}^1 \Pi_s \, d\nu(s),
\end{equation}
where $\Pi_s$ denotes product measure on $\{-1,1\}^{\N}$ with density $s$
and $\nu$ is some (unique) probability measure on $[0,1]$.
It is shown in \cite{st2017} that for all $\alpha\in (0,2)$,
$\nu$ is uniform distribution on $[0,1]$. 

We now exploit a different representation of this process.
Partition the unit interval $[0,1]$ into intervals $I_1,I_2,I_3,\ldots$ where $I_i$ has length
$1/2^i$. Let $U_1,U_2,U_3,\ldots$ be i.i.d.\ uniform random variables on $[0,1]$ and 
$Z_1,Z_2,Z_3,\ldots$ be i.i.d.\ uniform random variables on $\pm 1$.
Let $V_1,V_2,V_3,\ldots$ be defined by
$$
V_i:=Z_{j(i)}
$$
where $j(i)$ is chosen so that $U_i\in I_{j(i)}$. 
For people who are familiar with Kingman's theory of 
exchangeable random partitions of the integers, 
we are just first choosing an exchangeable random partition
of the integers 
using the paintbox $(1/2,1/4,\ldots)$ (see \cite{JB06})
and then assigning the same
value $1$ or $-1$, each with probability $1/2$, to {\it all} elements in a partition element,
independently for different partition elements. 
$(V_1,V_2,V_3,\ldots)$ is clearly exchangeable and its mixing measure $\nu$ in 
de Finetti's Theorem is also uniform by Theorem 3.12 in \cite{st2017}. It follows
that for all $\alpha\in (0,2)$, $(\sgn(X_1),\sgn(X_2),\ldots)$ and
$(V_1,V_2,V_3,\ldots)$ have the same distribution. Next it is clear that
\(\E [V_1 V_2 \cdots V_n]\) is the probability that in the random 
$(1/2,1/4,\ldots)$-paintbox partition restricted to $\{1,\ldots,n\}$ there are only
partitions with an even number of elements. One can show, using induction, conditioning
on the number of terms entering the first box and using the scale
invariance of this paintbox, that
the probability of this latter event is $1/(n+1)$, completing the proof.
\end{proof}

Corollary 6.12 in \cite{m2009} provides formulas for
\(\E [\sgn(X_1 X_2 \cdots X_n)]\) in terms of integrals over the sphere 
\( \mathbb{S}^{n-1} \) which ostensibly depend on $\alpha$. However, a consequence of 
Proposition~\ref{lemma: st} now is that these higher dimensional integrals do not in fact depend on $\alpha$.

\section*{Acknowledgements}

We thank Svante Janson for pointing out that the second integral in the main theorem is in $L_1$
and so one does not need to take the integral in the Cauchy principal value sense.
The first author acknowledges support from the European Research Council, grant agreement no.\ 682537.
The second author acknowledges the support of the Swedish Research 
Council and the Knut and Alice Wallenberg Foundation.

\end{document}